\newtheorem{thm}{Theorem}[section]
\newtheorem{example}[thm]{Example}
\newtheorem{theorem}[thm]{Theorem}
\newtheorem{lemma}[thm]{Lemma}
\newtheorem{definition}[thm]{Definition}
\newtheorem{remark}[thm]{Remark}
\newtheorem{fact}[thm]{Fact}
\begin{document}
\title
{
Perfect Multi Deletion Codes
Achieve
the Asymptotic Optimality of Code Size
}

\author{
Takehiko MORI
\thanks{
Department of Mathematics and Informatics,
Graduate School of Science and Engineering,
Chiba University
1-33 Yayoi-cho, Inage-ku, Chiba City,
Chiba Pref., JAPAN, 263-0022
}
 \and
Manabu HAGIWARA
\thanks{
Department of Mathematics and Informatics,
Graduate School of Science,
Chiba University
1-33 Yayoi-cho, Inage-ku, Chiba City,
Chiba Pref., JAPAN, 263-0022
}
}

\date{}
\maketitle

\begin{abstract}
This paper studies on the cardinality of perfect multi deletion binary codes.
The lower bound for any perfect deletion code with the fixed code length and the number of deletions, and
the asymptotic achievable of Levenshtein's upper bound are shown.
\end{abstract}

\section{Introduction}\label{sec:1}
Thanks to Levenshtein's pioneering work in the 1960's,
research on deletion codes has continued to be an attractive field in
information theory for over 50 years \cite{levenshtein1966binary}.
He found that VT codes were capable of correcting single deletion errors,
generalized VT codes for correcting single insertion/deletion/substitution errors,
and invented a decoding algorithm for VT codes.
He also provided an asymptotic lower bound and an asymptotic upper bound of the maximal
cardinality of $t$-deletion codes,
and he showed that VT codes achieve the upper bound in the single error case,
all in the same paper \cite{levenshtein1966binary}.
There is a gap between his upper bound and lower bound for multiple
deletion cases. To find a tight bound is still an open problem.

Construction of multi deletion codes is known to be challenging.
One successful construction is Helberg codes \cite{helberg2002multiple,abdel2012helberg,hagiwara2016short}.
A weakness of Helberg codes is its code size.
The size is far from both of Levenshtein's bounds, in particular for three or more deletions.
Recent surprising constructions were introduced by Sima et.al. \cite{sima2018two,sima2019optimal}.

The code size resulting from their construction approaches the lower bound,
while the size is still strictly smaller than the lower bound.

The authors' strategy to study the tightness of Levenshtein's upper bound
excludes explicit code construction.
This paper studies the tightness of the bound under the assumption of existence of perfect deletion codes.

In the 90's, Levenshtein also presented another remarkable work on VT codes:
VT codes were perfect for single deletion \cite{levenshtein1992perfect}.
Additionally he showed that
there existed perfect single deletion codes of length 3 for any alphabet size.
Bours extended this result, showing that
there existed perfect 2-deletion codes of length 4 for any alphabet size \cite{bours1995construction}.
Mahmoodi also extended the result, showing that
there existed perfect 3-deletion codes of length 5 for any alphabet size \cite{mahmoodi1998existence}.
It was also proven that there exist perfect 4-deletion codes
of length 6 for some specific alphabet size \cite{shalaby2002existence,yin2001combinatorial}.
Papers \cite{wang2006constructions,wang2008some,abel2010existence} are other examples of 
research for the existence of $(k-2)$-deletion codes of length $k$.

To determine the existence of perfect $t$-deletion codes of length $n$
over a binary alphabet is still an open and difficult problem.
This paper shows Levenshtein's upper bound for $t$-deletion codes is tight
for any $t$ and achievable by perfect $t$-deletion codes under the
assumption that there exist infinitely many perfect $t$-deletion binary codes.

\section{Notation and Preliminaries}\label{sec:2}
First, we introduce notation that is used throughout this paper.
The binary set $\{ 0, 1 \}$ is denoted by $\mathbb{B}$.
For a positive integer $n$, $\mathbb{B}^n$ denotes
the set of binary sequences of length $n$.
We define $\mathbb{B}^{0}$ as $\{ \epsilon \}$, where
$\epsilon$ is the empty word.
The symbol $t$ is used for the number of deletions.
Hence it is a positive integer.
For any set $X$, $\# X$ means the cardinality of $X$.

For a binary sequence $\mathbf{x}$, i.e., $\mathbf{x} \in \bigcup_{n \ge 0} \mathbb{B}^n$,
$|| \mathbf{x} ||$ denotes the number of runs of $\mathbf{x}$.
The set of sequences obtained by $t$-deletions to $\mathbf{x}$ is denoted by $\mathrm{dS}^{t} ( \mathbf{x} )$.
In this paper, $\mathrm{dS}^{t} ( \mathbf{x} )$ is called the \textit{deletion sphere}
\footnote{This is also called the \textit{deletion ball}.
In abstract mathematics, the terms sphere and ball are often used interchangeably.
}
 for $\mathbf{x}$.
The following are the upper and lower bounds of the cardinality of a deletion sphere.
\begin{fact}[\cite{levenshtein1966binary}]
\label{fact:levA}
$$
\binom{ || \mathbf{x} || - t + 1 }{ t }
\leq
\# \mathrm{dS}^{t} ( \mathbf{x} )
\leq
\binom{ || \mathbf{x} || + t - 1 }{ t }.
$$
\end{fact}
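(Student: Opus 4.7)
The plan is to parametrize $\mathrm{dS}^t(\mathbf{x})$ by \emph{run profiles}. Writing $r = \| \mathbf{x} \|$ and letting $\ell_1, \ldots, \ell_r$ denote the lengths of the runs of $\mathbf{x}$, every choice of $t$ positions to delete from $\mathbf{x}$ can be recorded as a tuple $(d_1, \ldots, d_r)$ with $d_i \geq 0$, $\sum_i d_i = t$, and $d_i \leq \ell_i$, where $d_i$ is the number of deletions from the $i$-th run. Two position-subsets inducing the same profile yield the same output word, since the choices within a single run are interchangeable, so the map sending a position-subset to its profile descends to a surjection from the set of profiles onto $\mathrm{dS}^t(\mathbf{x})$.

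For the upper bound I would simply drop the constraint $d_i \leq \ell_i$ and count all non-negative integer solutions of $\sum_i d_i = t$ by stars-and-bars, obtaining $\binom{r + t - 1}{t}$, which bounds $\# \mathrm{dS}^t(\mathbf{x})$ from above.

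For the lower bound I would restrict to \emph{simple profiles}: tuples $(d_1, \ldots, d_r) \in \{0, 1\}^r$ with $\sum_i d_i = t$ and no two consecutive coordinates both equal to $1$. These correspond bijectively to $t$-subsets of $\{1, \ldots, r\}$ with no two consecutive elements, and a standard gap argument gives $\binom{r - t + 1}{t}$ such subsets. I would then show that distinct simple profiles produce distinct elements of $\mathrm{dS}^t(\mathbf{x})$: since the chosen runs are non-adjacent, any collapse (occurring when $\ell_i = 1$ and $d_i = 1$) is localized, and the location of the collapse can be read off from the run-length sequence of the output compared with that of $\mathbf{x}$.

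The main technical obstacle is this last injectivity claim. A chosen run of length $1$ disappears, forcing its two preserved neighbors (of opposite bit value) to merge into a single run of their combined length, and I must rule out that two different simple profiles coincidentally yield the same output via different collapse patterns. I would resolve this by a synchronous traversal of $\mathbf{x}$ and the output: at each step, comparison of run lengths, together with the preserved neighbors flanking any collapsed run, uniquely pins down whether a given run was chosen, thereby recovering the profile.
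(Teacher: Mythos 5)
The paper does not prove this statement; it is quoted as a Fact with a citation to Levenshtein's 1966 paper, so there is no internal proof to compare against. Your argument is, in essence, the classical one. The upper bound is fine: the output of a deletion pattern depends only on its run profile $(d_1,\dots,d_r)$, and dropping the constraints $d_i\le \ell_i$ bounds the number of profiles by $\binom{r+t-1}{t}$ via stars and bars. The lower bound via simple profiles (one deletion in each of $t$ pairwise non-adjacent runs) is also the right construction, and the count $\binom{r-t+1}{t}$ of such profiles is correct. The injectivity claim you flag as the main obstacle is true, but I would caution against the recovery procedure as you describe it: the run-length sequence of the output, even together with collapse bookkeeping, is not enough by itself, since for $\mathbf{x}=0101$ the profiles $\{1,3\}$ and $\{2,4\}$ produce outputs $11$ and $00$ with identical run-length sequences --- you must use the symbols. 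A cleaner way to close the gap is a first-difference argument. Let $S\neq T$ be simple profiles, let $i$ be the least index of $S\mathbin{\triangle} T$, say $i\in S\setminus T$; since $|S|=|T|$ forces $|S\mathbin{\triangle} T|\ge 2$, we have $i+1\le r$. Write $\sigma_j$ for the symbol of the $j$-th run, $L=\ell_1+\cdots+\ell_{i-1}$ and $s=|S\cap\{1,\dots,i-1\}|$. Runs $1,\dots,i-1$ are treated identically by $S$ and $T$, so the two outputs agree on their first $L-s$ symbols. In the output for $T$, position $L-s+\ell_i$ is the last symbol contributed by run $i$, hence equals $\sigma_i$; in the output for $S$, run $i$ contributes only $\ell_i-1$ symbols and run $i+1\notin S$ (by non-adjacency) contributes all $\ell_{i+1}\ge 1$ of its symbols, so position $L-s+\ell_i$ equals $\overline{\sigma_i}$. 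The two outputs therefore differ, and with this patch your proof is complete.
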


\begin{definition}[$t$-deletion codes]
Let $C$ be a set of binary sequences.
$C$ is called a \textbf{$t$-deletion code} if
for any distinct $\mathbf{c}$ and $\mathbf{d} \in C$,
$$
\mathrm{dS}^{t}(\mathbf{c}) \cap \mathrm{dS}^{t}(\mathbf{d}) = \emptyset.
$$
If the code $C$ is a subset of $\mathbb{B}^n$ for a positive integer $n$,
$C$ is called a \textit{$t$-deletion code of length} $n$.
\end{definition}

One of the open problems on deletion codes is
to determine the maximal cardinality of
$t$-deletion codes of length $n$ for given $t$ and $n$.
Let us set the notation:
 $$M^{t}(n) := \text{ the maximum cardinality of $t$-deletion codes of length $n$.} $$
Even if $t=1$, $M^{1}(n)$ is only known for $n \le 7$ \cite{sloane2002single}.

On the other hand,
its asymptotic upper and lower bounds have been obtained by Levenshtein.
Let us introduce notation $\sim$ and $\simeq$ for describing asymptotic bounds.
For functions $f$ and $g$ from the set $\mathbb{Z}_{> 0}$ of positive integers to the set $\mathbb{R}$ of real numbers,
$f(n) \sim g(n)$ means $\lim_{n \rightarrow \infty} f(n)/g(n) = 1$
and 
$f(n) \lesssim g(n) $ means $\lim_{n \rightarrow \infty} f(n)/g(n) \le 1$.
\begin{fact}[\cite{levenshtein1966binary}]
\label{fact:levB}
$$
\frac{ (t!)^2 2^{n+t}}{ n^{2t} } \lesssim M^t (n) \lesssim \frac{ t! 2^n }{ n^t }.
$$
\end{fact}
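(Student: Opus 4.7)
The plan is to establish the two bounds by independent classical techniques: a volume (sphere-packing) argument for the upper bound and a Gilbert--Varshamov-style greedy construction for the lower bound. Both rely crucially on Fact~\ref{fact:levA}, together with the fact that extremal sequences (those with very few runs) form an exponentially small fraction of $\mathbb{B}^n$.

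For the upper bound, let $C \subseteq \mathbb{B}^n$ be a $t$-deletion code. Since the deletion spheres $\mathrm{dS}^t(\mathbf{c})$ are pairwise disjoint subsets of $\mathbb{B}^{n-t}$, one has
$$\sum_{\mathbf{c} \in C} \# \mathrm{dS}^t(\mathbf{c}) \le 2^{n-t}.$$
The naive plug-in of the lower bound $\# \mathrm{dS}^t(\mathbf{c}) \ge \binom{\| \mathbf{c} \| - t + 1}{t}$ from Fact~\ref{fact:levA} is too weak for codewords of low run count, so I would first discard those: the number of length-$n$ binary sequences with at most $(1-\varepsilon)n/2$ runs is $O(2^{n H((1-\varepsilon)/2)})$, which is exponentially smaller than the target $2^n/n^t$. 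For the remaining codewords $\#\mathrm{dS}^t(\mathbf{c}) \gtrsim ((1-\varepsilon)n/2)^t / t!$, and letting $\varepsilon \to 0$ produces $\# C \lesssim t!\,2^n / n^t$.

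For the lower bound, I would construct $C$ greedily inside $\mathbb{B}^n$. Two words $\mathbf{c}, \mathbf{d} \in \mathbb{B}^n$ conflict exactly when $\mathrm{dS}^t(\mathbf{c}) \cap \mathrm{dS}^t(\mathbf{d}) \neq \emptyset$, equivalently when $\mathbf{d}$ lies in the insertion neighborhood $\bigcup_{\mathbf{y} \in \mathrm{dS}^t(\mathbf{c})} \mathrm{iS}^t(\mathbf{y})$, where $\mathrm{iS}^t(\mathbf{y})$ is the set of length-$n$ supersequences of $\mathbf{y}$. Greedy removal yields a code of size $\ge 2^n/(\Delta+1)$, where $\Delta$ is the maximum number of sequences conflicting with a fixed codeword. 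Bounding the first factor by Fact~\ref{fact:levA} (upper side) and the insertion sphere by Levenshtein's binary formula $\# \mathrm{iS}^t(\mathbf{y}) = \sum_{k=0}^t \binom{n}{k}$ gives
$$\Delta \le \binom{n+t-1}{t} \sum_{k=0}^t \binom{n}{k} \sim \frac{n^{2t}}{(t!)^2},$$
so $M^t(n) \gtrsim (t!)^2 \, 2^n / n^{2t}$; the missing factor of $2^t$ in the statement of Fact~\ref{fact:levB} can be recovered by sharpening the insertion-sphere count (noting that the alphabet contributes a $2^t$ redundancy from binary choices at the inserted positions, or equivalently by working with $\mathbb{B}^{n-t}$ rather than $\mathbb{B}^n$ as the host space).

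The principal obstacle in both directions is not the packing/greedy framework itself but the passage from the \emph{worst-case} Fact~\ref{fact:levA} bounds to the correct asymptotic constants $t!$ and $(t!)^2$. This requires an explicit quantitative estimate showing that codewords of atypical run-count are negligible, which is a concentration-of-measure statement about the binomial distribution of runs in a random binary string. Once that reduction to ``typical'' codewords is in place, the rest of the argument is essentially routine algebra with $\binom{n}{t} \sim n^t/t!$.
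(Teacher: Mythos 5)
The paper does not prove this statement at all: it is quoted as a Fact with a citation to Levenshtein's 1966 paper, so there is no internal proof to compare against. Judged on its own merits, your upper-bound argument is sound and is essentially the classical one: disjointness of deletion spheres inside $\mathbb{B}^{n-t}$, discarding the exponentially negligible set of codewords with fewer than $(1-\varepsilon)n/2$ runs, applying the lower estimate of Fact~\ref{fact:levA} to the rest, and letting $\varepsilon\to 0$. That half is complete modulo routine estimates.

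The genuine gap is in the lower bound, and your proposed repair points at the wrong factor. The binary insertion-sphere count $\#\,\mathrm{iS}^t(\mathbf{y})=\sum_{k=0}^{t}\binom{n}{k}$ is an exact identity, independent of $\mathbf{y}$, and already discounts the ``redundant binary choices at inserted positions'' (that is precisely why the general formula is $\sum_k\binom{n}{k}(q-1)^k$ rather than $\binom{n}{t}q^t$); there is no $2^t$ to be recovered there, and ``working with $\mathbb{B}^{n-t}$ as the host space'' leads back to the same degree count. The missing factor $2^t$ actually comes from the \emph{deletion-sphere} factor in your degree bound: you used the worst case $\#\,\mathrm{dS}^t(\mathbf{c})\le\binom{n+t-1}{t}\sim n^t/t!$, valid only for codewords with close to $n$ runs, whereas a typical sequence has about $n/2$ runs, for which Fact~\ref{fact:levA} gives $\#\,\mathrm{dS}^t(\mathbf{c})\le\binom{n/2+o(n)+t-1}{t}\sim (n/2)^t/t! = n^t/(2^t\,t!)$. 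The fix is the same typicality restriction you already invoke for the upper bound: run the greedy argument on the induced subgraph whose vertex set is the $(1-o(1))2^n$ sequences with $n/2+O(n^{2/3})$ runs (a set on which any independent set is still a $t$-deletion code). There the maximum degree is at most $\frac{n^t}{2^t\,t!}\cdot\frac{n^t}{t!}(1+o(1))$, and $2^n(1-o(1))$ divided by this yields $\frac{(t!)^2 2^{n+t}}{n^{2t}}$ as required. Without that correction your argument only establishes $M^t(n)\gtrsim (t!)^2 2^{n}/n^{2t}$, which is weaker than the stated bound by the constant factor $2^t$ — harmless for the paper's eventual use of the upper bound only, but not a proof of the Fact as stated.
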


Levenshtein also showed that
the upper bound was achievable for $t=1$ by VT codes.
\begin{fact}[\cite{levenshtein1966binary}]
$$
\# \mathrm{VT}_n (0) \sim M^{t}(n) \sim \frac{2^n}{n},
$$
where $\mathrm{VT}_n (0)$ is a VT code defined below.
\end{fact}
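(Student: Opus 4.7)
The plan is to establish the double asymptotic equivalence by sandwiching $\#\mathrm{VT}_n(0)$ between a lower bound asymptotic to $2^n/n$ and Levenshtein's general upper bound from Fact~\ref{fact:levB} applied with $t=1$.

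First I would recall the definition
\[
\mathrm{VT}_n(a) := \Bigl\{ (x_1,\ldots,x_n) \in \mathbb{B}^n : \sum_{i=1}^{n} i\, x_i \equiv a \pmod{n+1} \Bigr\},
\]
and verify that $\mathrm{VT}_n(0)$ is a $1$-deletion code in the sense of the definition above. The standard argument is a syndrome decoder: given any $\mathbf{y} \in \mathrm{dS}^{1}(\mathbf{c})$ for $\mathbf{c} \in \mathrm{VT}_n(0)$, one can reconstruct $\mathbf{c}$ uniquely from $\mathbf{y}$, its Hamming weight, and the knowledge that $\sum_i i\, c_i \equiv 0 \pmod{n+1}$; hence distinct codewords have disjoint deletion spheres. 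Consequently $\#\mathrm{VT}_n(0) \leq M^{1}(n)$, and Fact~\ref{fact:levB} with $t=1$ then yields $\#\mathrm{VT}_n(0) \leq M^{1}(n) \lesssim 2^n / n$.

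Next I would establish the matching lower bound on $\#\mathrm{VT}_n(0)$. The residue map $\mathbf{x} \mapsto \sum_i i\, x_i \bmod (n+1)$ partitions $\mathbb{B}^n$ into the $n+1$ classes $\mathrm{VT}_n(0),\ldots,\mathrm{VT}_n(n)$, whose average size is $2^n/(n+1)$. Writing $\omega := \exp(2\pi\sqrt{-1}/(n+1))$ and applying discrete Fourier inversion,
\[
\#\mathrm{VT}_n(0) = \frac{1}{n+1}\sum_{k=0}^{n}\prod_{j=1}^{n}\bigl(1 + \omega^{kj}\bigr),
\]
the $k=0$ summand contributes $2^n$. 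If each $k \neq 0$ summand is non-negative, then $\#\mathrm{VT}_n(0) \geq 2^n/(n+1) \sim 2^n/n$, so $\#\mathrm{VT}_n(0) \gtrsim 2^n/n$, and combined with the upper bound this yields $\#\mathrm{VT}_n(0) \sim M^{1}(n) \sim 2^n/n$.

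The main obstacle, as I see it, is this non-negativity claim. A direct calculation handles it: setting $d := \gcd(k,n+1)$ and $m := (n+1)/d$, the multiset $\{kj \bmod (n+1) : 0 \leq j \leq n\}$ consists of each multiple of $d$ in $\mathbb{Z}/(n+1)$ repeated $d$ times, so $\prod_{j=0}^{n}(1+\omega^{kj}) = \bigl(\prod_{\ell=0}^{m-1}(1+\zeta^{\ell})\bigr)^{d}$ with $\zeta$ a primitive $m$-th root of unity; evaluating the identity $\prod_{\ell=0}^{m-1}(x-\zeta^{\ell}) = x^{m} - 1$ at $x = -1$ shows this product equals $0$ if $m$ is even and $2^{d}$ if $m$ is odd, so after dividing out the $j=0$ factor of $2$ the contribution for $k \neq 0$ is either $0$ or $2^{d-1}$, in particular non-negative. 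Alternatively, one may invoke the classical fact, originally due to Varshamov and Tenengolts, that $\mathrm{VT}_n(0)$ is always a maximum-size residue class.
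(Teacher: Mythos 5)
The paper does not actually prove this statement: it is quoted as a Fact with a citation to Levenshtein's 1966 paper, so there is no internal proof to compare against. Your argument is correct and essentially reconstructs the classical proof. The upper half of your sandwich, $\#\mathrm{VT}_n(0) \le M^{1}(n) \lesssim 2^n/n$, is exactly Fact~\ref{fact:levB} with $t=1$ (the statement's $M^{t}(n)$ is evidently meant as $M^{1}(n)$ here) combined with the $1$-deletion property of VT codes, which the paper itself records as known. For the lower half you correctly notice that plain pigeonhole over the $n+1$ residue classes only shows that \emph{some} class has size at least $2^n/(n+1)$, not the class $a=0$ specifically, and you close that gap with the discrete Fourier inversion formula and the non-negativity of every $k\neq 0$ term. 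That computation checks out: with $d=\gcd(k,n+1)$ and $m=(n+1)/d$, evaluating $\prod_{\ell=0}^{m-1}(x-\zeta^{\ell})=x^{m}-1$ at $x=-1$ gives $0$ for $m$ even and $2$ for $m$ odd, so each $k\neq 0$ summand is $0$ or $2^{d-1}\ge 0$, whence $\#\mathrm{VT}_n(0)\ge 2^n/(n+1)\sim 2^n/n$; this is the standard Ginzburg/Varshamov--Tenengolts enumeration underlying Levenshtein's claim. The only loose end is that you merely sketch the syndrome-decoding argument for the $1$-deletion property, but since the paper explicitly takes that property as known in its definition of VT codes, this is an acceptable level of detail.
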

\begin{definition}[VT codes]
For any positive integer $n$ and any integer $a$,
define
$$
\mathrm{VT}_{n} (a)
 := \{ \mathbf{x} \in \mathbb{B}^n \mid x_1 + 2 x_2 \dots + n x_n \equiv a \pmod{n+1} \}.
$$
The set $\mathrm{VT}_n (a)$ is called a \textbf{VT code}.
It is known that any VT code is a $1$-deletion code \cite{levenshtein1966binary}.
\end{definition}

An attractive property of VT codes is their perfectness.
To define perfect codes, the following notation $\mathrm{dS}^t (X)$ is introduced.
For a set $X$ of binary sequences, we define
$$\mathrm{dS}^{t} (X) := \bigcup_{\mathbf{x} \in X} \mathrm{dS}^{t} (x).$$
\begin{definition}[Perfect Codes]
Let $n \ge t$.
A $t$-deletion code $C$ of length $n$ is called a \textbf{perfect $t$-deletion code} if
$$
\mathrm{dS}^{t}(X) = \mathbb{B}^{n-t}.
$$
In other words, any short sequence $\mathbf{y} \in \mathbb{B}^{n-t}$
belongs to a deletion sphere $\mathrm{dS}^t (\mathbf{x})$ of some codeword $\mathbf{x} \in C$.
\end{definition}

\begin{example}
For any $\mathbf{x} \in \mathbb{B}^n$,
the singleton $\{ \mathbf{x} \}$ is trivially a perfect $n$-deletion code
since
$\mathrm{dS}^{n} ( \mathbf{x} ) = \{ \epsilon \} = \mathbb{B}^{n-n}$.

A remarkable non-trivial example is VT codes.
It is known that for any $a$ and $n$, $\mathrm{VT}_n (a)$ is a perfect $1$-deletion code
\cite{levenshtein1992perfect}.
\end{example}

The motivation of this research is the following question:
why is it that VT codes have two beautiful properties:
1. they achieve the asymptotic upper bound, and
2. they satisfy the perfectness.
Our main result provides an answer to this question:
\begin{theorem}\label{thm:main1}
Assume that there exists a perfect $t$-deletion code $C_n$ of length $n$ for each positive integer $n$.
Then
$$
\# C_n \sim M^t (n) \sim \frac{ t! 2^n }{ n^t }.
$$
\end{theorem}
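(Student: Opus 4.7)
The plan is to exploit the double nature of the hypothesis: a perfect $t$-deletion code is simultaneously a $t$-deletion code (its deletion spheres are pairwise disjoint) and perfect (those spheres cover $\mathbb{B}^{n-t}$), which forces the partition identity
\[
2^{n-t} = \sum_{\mathbf{c} \in C_n} \# \mathrm{dS}^{t}(\mathbf{c}).
\]
The upper half of Fact~\ref{fact:levA} bounds each summand by $\binom{\|\mathbf{c}\|+t-1}{t}$. Replacing $\|\mathbf{c}\|$ crudely by $n$ yields only $\#C_n \ge (1-o(1))\,t!\,2^n/(2^t n^t)$, short of the target by the factor $2^t$. The heart of the argument is therefore to show that codewords with $\|\mathbf{c}\|$ appreciably larger than $n/2$ contribute only a vanishing fraction of the sum.

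Fix $\delta > 0$ and split $C_n$ into $A_n := \{\mathbf{c} \in C_n : \|\mathbf{c}\| \le (1+\delta)n/2\}$ and its complement $B_n$. Under the uniform distribution on $\mathbb{B}^n$, the transition indicators $\mathbf{1}[x_i \ne x_{i+1}]$ for $1 \le i < n$ are mutually independent $\mathrm{Ber}(1/2)$ random variables, so $\|\mathbf{x}\| - 1$ is distributed as $\mathrm{Bin}(n-1, 1/2)$ and a Chernoff bound gives
\[
\# B_n \le \#\{\mathbf{x} \in \mathbb{B}^n : \|\mathbf{x}\| > (1+\delta)n/2\} \le 2^n \cdot 2^{-\gamma(\delta)\,n}
\]
for some constant $\gamma(\delta) > 0$. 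Consequently
\[
\sum_{\mathbf{c} \in B_n} \binom{\|\mathbf{c}\|+t-1}{t} \le \# B_n \cdot \binom{n+t-1}{t} = O\!\bigl( 2^n n^t 2^{-\gamma(\delta)\,n} \bigr) = o(2^{n-t}).
\]

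Plugging this back into the partition identity leaves
\[
(1-o(1))\,2^{n-t} \le \# A_n \cdot \binom{(1+\delta)n/2 + t - 1}{t} = (1+o(1))\cdot\# A_n \cdot \frac{(1+\delta)^t n^t}{2^t\,t!},
\]
which rearranges to $\# C_n \ge \# A_n \ge (1-o(1))\,\dfrac{t!\,2^n}{(1+\delta)^t\,n^t}$. Sending $n \to \infty$ and then $\delta \downarrow 0$ yields $\liminf_n \# C_n \cdot n^t / (t!\,2^n) \ge 1$. Fact~\ref{fact:levB} already supplies $\# C_n \le M^t(n) \lesssim t!\,2^n/n^t$, so both $\# C_n \sim t!\,2^n/n^t$ and $M^t(n) \sim t!\,2^n/n^t$ follow.

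The main obstacle is the concentration step: the tail on the number of runs has to decay strictly faster than any polynomial in order to absorb both the factor $\binom{n+t-1}{t}$ and the $2^t$ discrepancy noted in the opening paragraph. The i.i.d.\ nature of the transition indicators under the uniform measure reduces this to a routine binomial tail estimate, but the order of the two limits — first $n \to \infty$, then $\delta \downarrow 0$ — has to be handled against the $\sim$/$\lesssim$ conventions of Fact~\ref{fact:levB} so that no residual factor of $(1+\delta)^t$ survives into the final equality.
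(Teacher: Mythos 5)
Your proof is correct, and it takes a genuinely different route from the paper's. The paper never writes down the global partition identity $2^{n-t}=\sum_{\mathbf{c}\in C_n}\#\mathrm{dS}^t(\mathbf{c})$; instead it stratifies both $\mathbb{B}^{n-t}$ and $C_n$ by the number of runs, uses the fact that $t$ deletions change the run count by at most $2t$ (Lemma \ref{lemma:UB_cardOfDelSph}) to localize which codewords can cover which short words, derives a windowed covering inequality (Lemmas \ref{lemma:LB_cardOfPerfect}--\ref{lemma:LB_cardOfPerfect2}), and then averages that window over all positions via the double-sum device of Lemma \ref{lemma:doubleSumUbLb}, with window width $j=\lfloor n^{1/3}\rfloor$ and boundary cutoff $c=\lfloor n^{2/3}\rfloor$ chosen so that the binomial ratio $\binom{c+t-1}{t}/\binom{c+j+3t-1}{t}$ tends to $1$ while the tail sums of $\binom{n-1}{r}/2^{n-1}$ vanish. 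You replace all of that machinery with a single truncation: the run statistic of a uniform word is $1+\mathrm{Bin}(n-1,1/2)$ (consistent with Fact \ref{lemma:cardOfBnr}), so the at most $2^{n}2^{-\gamma(\delta)n}$ words with more than $(1+\delta)n/2$ runs contribute $o(2^{n-t})$ to the partition even after multiplying by the worst-case sphere size $\binom{n+t-1}{t}=O(n^t)$, and the remaining codewords each have sphere size at most $\binom{(1+\delta)n/2+t-1}{t}\sim(1+\delta)^t n^t/(2^t t!)$, recovering exactly the factor $2^t$ that the crude bound loses. Your handling of the limits is sound: for each fixed $\delta$ you get $\liminf_n \#C_n\, n^t/(t!\,2^n)\ge(1+\delta)^{-t}$, and taking $\delta\downarrow 0$ afterwards, together with $\#C_n\le M^t(n)\lesssim t!\,2^n/n^t$ from Fact \ref{fact:levB}, squeezes all three quantities. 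The trade-off is that your argument, as written, is purely asymptotic and does not yield the explicit finite-$n$ lower bound of Theorem \ref{thm:main2}, which the paper obtains as a standalone result; on the other hand you need only the upper half of Fact \ref{fact:levA} and none of the run-change or sliding-window lemmas, so your derivation of Theorem \ref{thm:main1} itself is considerably shorter.
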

The proof is given at the end of \S\ref{sec:4} with lemmas in \S\ref{sec:3} and \S\ref{sec:4}.

We conclude this section by introducing one more notation.
Given a positive integer $r$, $X_r$ denotes the subset of a set $X$ such that 
the number of run of the element is equal to $r$, i.e.,
$$X_{r} := \{ \mathbf{x} \in X \mid  || \mathbf{x} || = r \}.$$

\begin{fact}[\cite{levenshtein1966binary}]
\label{lemma:cardOfBnr}
For any positive integer $r$ with $1 \le r \le n$,
$$
\# (\mathbb{B}^n)_{r} = 2 \binom{n-1}{r-1}.
$$
\end{fact}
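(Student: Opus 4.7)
The plan is to establish the matching lower bound $\#C_n \gtrsim t!\,2^n / n^t$; the upper bound $\#C_n \leq M^t(n) \lesssim t!\,2^n/n^t$ is immediate because $C_n$ is a $t$-deletion code and Fact \ref{fact:levB} already supplies the Levenshtein asymptotic upper bound on $M^t(n)$. Once both bounds are in hand, both asymptotic equivalences $\#C_n \sim M^t(n) \sim t!\,2^n/n^t$ follow simultaneously, so all the work is on the lower bound for $\#C_n$.

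The starting point is the sphere-covering identity given by perfectness:
$$
\sum_{\mathbf{c} \in C_n} \# \mathrm{dS}^t(\mathbf{c}) \;=\; 2^{n-t},
$$
which holds because in a $t$-deletion code the deletion spheres of distinct codewords are disjoint, while perfectness makes their union all of $\mathbb{B}^{n-t}$. Combining with the sphere-size upper bound from Fact \ref{fact:levA} yields
$$
\sum_{\mathbf{c} \in C_n} \binom{||\mathbf{c}|| + t - 1}{t} \;\geq\; 2^{n-t}.
$$
At this point a naive bound $\binom{||\mathbf{c}||+t-1}{t} \leq \binom{n+t-1}{t} \sim n^t/t!$ would give only $\#C_n \gtrsim t!\,2^{n-t}/n^t$, which is smaller than the target by a factor of $2^t$. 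To recover this factor I would exploit that most sequences in $\mathbb{B}^n$ have roughly $n/2$ runs.

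Concretely, I would split $C_n$ at the cutoff $r^* := \lceil n/2 + \sqrt{t\,n \log n}\,\rceil$, letting $A := \{\mathbf{c} \in C_n : ||\mathbf{c}|| \leq r^*\}$ (``typical'') and $B := C_n \setminus A$ (``atypical''). Since $B \subseteq \{\mathbf{x} \in \mathbb{B}^n : ||\mathbf{x}|| > r^*\}$, Fact \ref{lemma:cardOfBnr} gives $\#B \leq \sum_{r > r^*} 2\binom{n-1}{r-1}$, and a standard Chernoff/Hoeffding tail estimate on this binomial sum at distance $\sqrt{t\,n\log n}$ from the mean yields $\#B = O(2^n / n^{2t})$. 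Combined with the trivial $\binom{||\mathbf{c}||+t-1}{t} = O(n^t)$, the atypical contribution is
$$
\sum_{\mathbf{c} \in B} \binom{||\mathbf{c}|| + t - 1}{t} \;=\; O(2^n / n^t) \;=\; o(2^{n-t}),
$$
so almost all of the mass must come from $A$: $\sum_{\mathbf{c} \in A} \binom{||\mathbf{c}||+t-1}{t} \geq (1-o(1))\,2^{n-t}$. Every $\mathbf{c} \in A$ satisfies $||\mathbf{c}|| \leq r^* = n/2 + o(n)$, so $\binom{r^*+t-1}{t} \sim (n/2)^t/t! = n^t/(2^t\,t!)$, and therefore
$$
\#C_n \;\geq\; \#A \;\geq\; (1-o(1)) \cdot \frac{2^{n-t} \cdot 2^t\,t!}{n^t} \;=\; (1-o(1)) \cdot \frac{t!\,2^n}{n^t}.
$$

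The only delicate point --- and what I expect to be the main obstacle --- is the choice of $r^*$: it must be close enough to $n/2$ that $\binom{r^*+t-1}{t}$ retains the leading factor $(n/2)^t$ (losing this is exactly what costs the $2^t$ in the naive argument), yet far enough out in the tail that the Chernoff bound on $\sum_{r > r^*}\binom{n-1}{r-1}$ beats the $n^t$ coming from the trivial sphere-size bound. Taking $r^* - n/2$ of order $\sqrt{n\log n}$ with a constant tied to $t$ achieves both simultaneously, and this binomial tail estimate is essentially the only quantitative ingredient the proof needs beyond the facts already stated.
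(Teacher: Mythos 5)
Your proposal does not prove the statement it was asked to prove. The statement is Fact \ref{lemma:cardOfBnr}, the purely combinatorial identity $\#(\mathbb{B}^n)_r = 2\binom{n-1}{r-1}$ counting binary sequences of length $n$ with exactly $r$ runs. What you have written instead is a proof sketch of Theorem \ref{thm:main1} (the asymptotic optimality of perfect $t$-deletion codes), and indeed your argument \emph{invokes} Fact \ref{lemma:cardOfBnr} as an ingredient rather than establishing it. However well the covering/typical-set argument might work for the main theorem, it is not an answer to the question posed.

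For the record, the fact itself has a one-line bijective proof (the paper simply cites Levenshtein and gives none): a sequence $\mathbf{x} \in \mathbb{B}^n$ with exactly $r$ runs is uniquely determined by the value of its first symbol, for which there are $2$ choices, together with the set of positions $i \in \{2, \dots, n\}$ at which $x_i \neq x_{i-1}$, which must be a subset of size exactly $r-1$ of the $n-1$ available gaps; there are $\binom{n-1}{r-1}$ such subsets, giving $2\binom{n-1}{r-1}$ in total. You should supply an argument of this kind rather than the asymptotic analysis you have written.
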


\section{Lemmas}\label{sec:3}
This section is devoted to lemmas that are useful to prove our main contribution,
i.e., Theorem \ref{thm:main1},
but that are provable without properties of deletion codes.
Most of the lemmas are statements on binomial coefficients.

\begin{lemma}
\label{lemma:doubleSumUbLb}
For any integers $0 \le I \le K$
and a function $f : \mathbb{Z} \rightarrow \mathbb{R}$,
if $f(x) \ge 0$ for any $x \in \mathbb{Z}$,
$$
(I + 1)
\sum_{I \le r \le K} 
f(r)
\le
\sum_{0 \le i \le I} \sum_{0 \le k \le K} f(i+k)
\le
(I+1) \sum_{0 \le r \le I+K}f(r)
\le
(K+1) \sum_{0 \le r \le I+K}f(r),
$$
where $\mathbb{Z}$ is the set of integers
and $\mathbb{R}$ is the set of real numbers.
\end{lemma}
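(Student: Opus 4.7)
The strategy is a straightforward reindexing of the double sum by the value $r = i+k$. Let me set $N(r) := \#\{(i,k) \in \mathbb{Z}^2 \mid 0 \le i \le I,\ 0 \le k \le K,\ i+k = r\}$ for each integer $r$. Then the middle expression can be rewritten as
$$
\sum_{0 \le i \le I} \sum_{0 \le k \le K} f(i+k) = \sum_{r=0}^{I+K} N(r) f(r),
$$
so all three inequalities reduce to comparing the coefficients $N(r)$ against the constants $I+1$ and $K+1$ on various ranges of $r$, which is legitimate because $f \ge 0$.

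The first step is to compute $N(r)$ explicitly. Using the hypothesis $I \le K$, a routine case analysis on whether $r$ lies in $[0,I]$, $[I,K]$, or $[K,I+K]$ yields
$$
N(r) = \begin{cases} r+1 & 0 \le r \le I, \\ I+1 & I \le r \le K, \\ I+K-r+1 & K \le r \le I+K. \end{cases}
$$
In particular, $0 \le N(r) \le I+1$ for every $r$ in $[0, I+K]$, with equality $N(r) = I+1$ precisely on the middle interval $I \le r \le K$.

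With this in hand, each inequality is immediate. For the leftmost inequality, I would restrict the sum $\sum_r N(r) f(r)$ to the range $I \le r \le K$, where $N(r) = I+1$ and the remaining terms are nonnegative, giving
$$
(I+1) \sum_{I \le r \le K} f(r) \le \sum_{r=0}^{I+K} N(r) f(r).
$$
For the middle inequality, I would use $N(r) \le I+1$ pointwise on $[0, I+K]$ together with $f(r) \ge 0$ to conclude
$$
\sum_{r=0}^{I+K} N(r) f(r) \le (I+1) \sum_{0 \le r \le I+K} f(r).
$$
The rightmost inequality is trivial from $I+1 \le K+1$ and $f \ge 0$.

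There is no real obstacle here; the only thing to be careful about is the case analysis for $N(r)$ and the observation that the hypothesis $I \le K$ is exactly what guarantees a nonempty middle plateau where $N(r)$ attains its maximum $I+1$. The nonnegativity of $f$ is used crucially in all three inequalities, since otherwise neither dropping terms nor replacing coefficients pointwise would preserve the direction of the inequality.
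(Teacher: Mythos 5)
Your proof is correct and follows essentially the same route as the paper: both arguments reindex the double sum by $r = i+k$ and compare the multiplicity of each value $r$ (exactly $I+1$ on $[I,K]$, at most $I+1$ overall) against the constant coefficients, using $f \ge 0$. Your version merely makes the multiplicity function $N(r)$ fully explicit, which the paper leaves as a verbal observation.
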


\begin{proof}
There are just $I+1$ solutions on variables $0 \le i \le I$ and $0 \le k \le K$
of an equation $i+k = r$ for a given $I \le r \le K$.
Hence the leftmost inequality holds.

On the other hand,
there are at most $I+1$ solutions on variables $0 \le i \le I$ and $0 \le k \le K$
for an equation $i+k=r$ for a given $0 \le r \le I+K$.
Hence the second inequality holds.

The third one follows from the assumption that $I \le K$.
\end{proof}
\begin{lemma}
\label{lemma:binomExample3}
For any positive integers $n$ and $p$
with $p + t \le n$,
we have
$$
\frac{ \binom{n-t-1}{p-1} }
     { \binom{p+t-1}{t}   }
=
\frac{1}{\binom{n-1}{t}}
\binom{n-1}{p+t-1}.
$$
\end{lemma}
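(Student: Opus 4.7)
The plan is to clear denominators and recognize the resulting identity as a standard subset-of-a-subset (trinomial revision) identity. Multiplying both sides of the claimed equality by $\binom{p+t-1}{t}\binom{n-1}{t}$, the statement becomes
$$
\binom{n-1}{t}\binom{n-t-1}{p-1} = \binom{n-1}{p+t-1}\binom{p+t-1}{t},
$$
which is exactly the identity $\binom{a}{c}\binom{a-c}{b-c} = \binom{a}{b}\binom{b}{c}$ with $a = n-1$, $b = p+t-1$, $c = t$. So once I reduce the claim to this form, I can either cite this as a standard identity or give the one-line combinatorial justification: both sides count the number of ways to pick a $(p+t-1)$-subset of an $(n-1)$-set together with a distinguished $t$-subset inside it, obtained either by first choosing the $t$-subset and then $p-1$ more elements from the remaining $n-t-1$, or by choosing the full $(p+t-1)$-subset and then a $t$-subset of it.

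In the write-up I would first check that the binomial coefficients appearing are all well-defined, using the hypothesis $p+t \le n$ (so $n-t-1 \ge p-1 \ge 0$ and $n-1 \ge p+t-1 \ge t \ge 0$) to ensure no degenerate cases. Then I would state the subset-of-a-subset identity, apply it with the substitution above, and finally divide by $\binom{p+t-1}{t}\binom{n-1}{t}$ to recover the displayed formula. There is no real obstacle here; the only thing to be careful about is the index bookkeeping when matching $(a,b,c)$ to $(n-1,p+t-1,t)$.
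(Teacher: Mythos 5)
Your proposal is correct. It does, however, take a slightly different route from the paper: the paper simply expands both sides into factorials and checks that each equals $\frac{t!\,(n-t-1)!}{(p+t-1)!\,(n-t-p)!}$, a purely mechanical verification. You instead clear denominators to arrive at $\binom{n-1}{t}\binom{n-t-1}{p-1} = \binom{n-1}{p+t-1}\binom{p+t-1}{t}$ and recognize this as the standard subset-of-a-subset (trinomial revision) identity $\binom{a}{c}\binom{a-c}{b-c}=\binom{a}{b}\binom{b}{c}$ with $(a,b,c)=(n-1,\,p+t-1,\,t)$, justified by a double count of pairs (a $(p+t-1)$-subset of an $(n-1)$-set together with a distinguished $t$-subset of it). Your substitution checks out ($\binom{a-c}{b-c}=\binom{n-t-1}{p-1}$ as required), and your verification that the hypotheses $p+t\le n$, $p\ge 1$, $t\ge 1$ keep both denominators positive is a point of care the paper leaves implicit. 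What your approach buys is conceptual transparency and reusability of a named identity; what the paper's buys is self-containedness with no appeal to anything beyond the definition of the binomial coefficient. Both are complete and correct.
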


\begin{proof}
$$
\frac{ \binom{n-t-1}{p-1} }
     { \binom{p+t-1}{t}   }
=
\frac{ \frac{(n-t-1)!}{(p-1)!(n-t-p)!} }
     { \frac{(p+t-1)!}{t! (p-1)!} }
=
\frac{ t! (n-t-1)! }{ (p+t-1)! (n-t-p)! }.
$$

On the other hand,
$$
\frac{1}{\binom{n-1}{t}}
\binom{n-1}{p+t-1}
=
\frac{ \frac{(n-1)!}{(p+t-1)!(n-p-t)!} }
     { \frac{(n-1)!}{t! (n-t-1)!} }
=
\frac{ t! (n-t-1)! }{ (p+t-1)! (n-t-p)! }.
$$
\end{proof}

\begin{lemma}
\label{lemma:binomBoundExample2}
For any positive integers $a, b$, and $c$ with $c \le a \le b$,
we have
$$
\frac{ \binom{c+t-1}{t} }
{ \binom{c+b-a+3t-1}{t} }
\frac{ \binom{b+3t-1}{t} }
{ \binom{a+t-1}{t} }
\le 1.
$$
\end{lemma}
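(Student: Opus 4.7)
The plan is to rewrite the inequality in a monotonicity-friendly form. Setting $d := b - a \ge 0$, I observe that $c + b - a + 3t - 1 = c + d + 3t - 1$ and $b + 3t - 1 = a + d + 3t - 1$, so the inequality to prove is equivalent to $f(c) \le f(a)$, where
$$
f(x) := \frac{\binom{x+t-1}{t}}{\binom{x+d+3t-1}{t}}.
$$
Since $c \le a$ by hypothesis, it suffices to show that $f$ is non-decreasing on the positive integers.

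To verify the monotonicity, I would expand each binomial coefficient as a product of $t$ consecutive factors, cancelling the common $t!$ in numerator and denominator. The numerator $\binom{x+t-1}{t}$ contributes the factors $x, x+1, \dots, x+t-1$, while the denominator $\binom{x+d+3t-1}{t}$ contributes $x+d+2t, x+d+2t+1, \dots, x+d+3t-1$. Pairing the $i$-th factor of each (for $i = 0, 1, \dots, t-1$) gives
$$
f(x) = \prod_{i=0}^{t-1} \frac{x+i}{x+d+2t+i}.
$$

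Each factor can be rewritten as $1 - \dfrac{d+2t}{x+d+2t+i}$. Since $d + 2t > 0$, this is a positive, non-decreasing function of $x$ on the positive integers. Therefore $f$, being a product of positive non-decreasing functions, is itself non-decreasing, which yields $f(c) \le f(a)$ and hence the desired inequality.

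I do not foresee any serious obstacle; the whole argument reduces to recognizing the ratio of binomials as a single rational function in $x$ (after pulling out the fixed shift $d = b-a$) and then checking that each of the $t$ linear factors in the resulting product form is monotone. The only delicate step is the algebraic bookkeeping in expanding the binomials so that the exponent $3t$ on one side and $t$ on the other line up to give a clean offset of $d + 2t$ in every paired factor.
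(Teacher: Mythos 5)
Your proof is correct and is essentially the paper's argument in a tidier wrapper: both expand the binomial coefficients into products of $t$ linear factors and compare them factor by factor, your per-factor monotonicity of $1 - \frac{d+2t}{x+d+2t+i}$ being exactly the paper's verification that $\frac{(c+i)(b+2t+i)}{(c+b-a+2t+i)(a+i)} \le 1$, i.e.\ $0 \le (a-c)(b-a+2t)$.
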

\begin{proof}
\begin{align*}
\frac{ \binom{c+t-1}{t} }
{ \binom{c+b-a+3t-1}{t} }
\frac{ \binom{b+3t-1}{t} }
{ \binom{a+t-1}{t} }
&=
\frac{ \frac{ (c+t-1)!}     { t! (c-1)!}        }
     { \frac{ (c+b-a+3t-1)!}{ t! (c+b-a+2t-1)!} }
\frac{ \frac{ (b+3t-1)!}    { t! (b+2t-1)! }    }
     { \frac{ (a+t-1)!}     { t! (a-1)!}        }\\
&=
\frac{ \frac{ (c+t-1)!}     { (c-1)!}        }
     { \frac{ (c+b-a+3t-1)!}{ (c+b-a+2t-1)!} }
\frac{ \frac{ (b+3t-1)!}    { (b+2t-1)! }    }
     { \frac{ (a+t-1)!}     { (a-1)!}        }\\
&=
\frac{ \prod_{0 \le i \le t-1} c+i}
     { \prod_{0 \le i \le t-1} c+b-a+2t+i }
\frac{ \prod_{0 \le i \le t-1} b+2t + i }
     { \prod_{0 \le i \le t-1} a+i }\\
&= \prod_{0 \le i \le t-1} \frac{ (c+i)(b+2t+i) }{ (c+b-a+2t+i)(a+i)}.
\end{align*}
Hence it is enough to show 
$$\frac{ (c+i)(b+2t+i) }{ (c+b-a+2t+i)(a+i)} \le 1$$
for each $0 \le i \le t-1$.
By routine calculation, we can verify that this is equivalent to
$$
0 \le (a-c)(b - a + 2t).
$$
This follows from the assumption $c \le a \le b$.
\end{proof}

\begin{lemma}
\label{lemma:convergeMinority}
$$
\lim_{n \rightarrow \infty}
 \sum_{0 \le r \le 2 \lfloor n^{2/3} \rfloor}
  \frac{\binom{n-1}{r}}
       { 2^{n-1} } = 0.
$$
\end{lemma}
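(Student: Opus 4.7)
My plan is to recognize the sum as the fraction of $(n-1)$-bit binary strings whose Hamming weight is at most $K := 2\lfloor n^{2/3}\rfloor$, and to bound this fraction by a second-moment argument (Chebyshev's inequality). Since $K = o(n)$, the threshold $K$ lies well below the ``mean'' $(n-1)/2$ where the binomial coefficients are concentrated, so the fraction should vanish.

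Concretely, I would let $Y$ denote the Hamming weight of a uniformly random element of $\mathbb{B}^{n-1}$, so that $P(Y = r) = \binom{n-1}{r}/2^{n-1}$, $\mathbb{E}[Y] = (n-1)/2$, and $\mathrm{Var}(Y) = (n-1)/4$. The sum in the lemma equals $P(Y \le K)$. For all sufficiently large $n$, the gap $a := (n-1)/2 - K$ satisfies $a \ge n/4$, since $K \le 2n^{2/3} = o(n)$. Chebyshev's inequality then yields
$$
P(Y \le K)
\le P\bigl(|Y - \mathbb{E}[Y]| \ge a\bigr)
\le \frac{\mathrm{Var}(Y)}{a^2}
\le \frac{(n-1)/4}{(n/4)^2}
= O\!\left(\frac{1}{n}\right),
$$
which tends to $0$ as $n \to \infty$.

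I do not anticipate any substantive obstacle: all that is needed is the routine check that $a = \Theta(n)$ for large $n$, which is a one-line comparison between $2n^{2/3}$ and $(n-1)/2$. A purely combinatorial alternative would use the monotonicity of $\binom{n-1}{r}$ on $[0, K]$ to bound the sum by $(K+1)\binom{n-1}{K}$ and then invoke Stirling's formula to show $\binom{n-1}{K}/2^{n-1} = 2^{-n + O(n^{2/3}\log n)}$; this avoids probability altogether, but the Chebyshev approach is both shorter and more transparent.
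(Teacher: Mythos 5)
Your Chebyshev argument is correct: with $Y$ the Hamming weight of a uniform element of $\mathbb{B}^{n-1}$ the sum is exactly $P(Y \le 2\lfloor n^{2/3}\rfloor)$, the gap $a = (n-1)/2 - 2\lfloor n^{2/3}\rfloor$ is indeed at least $n/4$ for large $n$, and the variance bound gives $O(1/n)$, which suffices. The paper, however, proceeds differently: it does not use any probabilistic machinery, instead bounding each term crudely by $\binom{n-1}{r} \le n^r \le n^{2\lfloor n^{2/3}\rfloor}$, so that the whole numerator is at most $3 n^{2/3}\, n^{2 n^{2/3}} = 3\cdot 2^{(2n^{2/3}+2/3)\log_2 n}$, and then observing that $(2n^{2/3}+2/3)\log_2 n - (n-1) \to -\infty$. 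This is essentially the ``purely combinatorial alternative'' you sketch in your last sentence, except the paper avoids even Stirling's formula by using the elementary bound $\binom{n-1}{r}\le n^r$. The trade-off: your route is shorter and yields a clean $O(1/n)$ rate from a standard tool, while the paper's route is entirely self-contained (no probability, no Stirling) and in fact gives the much stronger bound $6\cdot 2^{-n/2}$ for large $n$ --- though neither rate is needed, since only convergence to $0$ is used downstream in Lemma \ref{lemma:FourConverges}.
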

\begin{proof}
To prove the statement,
we can assume that $n$ is sufficiently large.
This assumption implies $2 \lfloor n^{2/3} \rfloor < n$
and $(2 n^{2/3} + 2/3) \log_2 n < n/2$.

For any $0 \le r \le 2 \lfloor n^{2/3} \rfloor $,
\begin{align*}
\binom{n-1}{r}
&=   \frac{ (n-1)(n-2) \dots (n-r) }{ r! }\\
&\le (n-1)(n-2) \dots (n-r)\\
&\le n^r \\
&\le n^{2 \lfloor n^{2/3} \rfloor}.
\end{align*}

Hence
\begin{align*}
\sum_{0 \le r \le 2 \lfloor n^{2/3} \rfloor}
 \binom{n-1}{r}
 &\le
  \sum_{0 \le r \le 2 \lfloor n^{2/3} \rfloor}
   n^{2 \lfloor n^{2/3} \rfloor}\\
 &\le
   (2 \lfloor n^{2/3} \rfloor + 1)
   n^{2 \lfloor n^{2/3} \rfloor}\\
 &\le
   3 \lfloor n^{2/3} \rfloor
   n^{2 \lfloor n^{2/3} \rfloor}\\
 &\le
   3  n^{2/3} 
   n^{2 n^{2/3} }\\
 &=
   3 n^{ 2n^{2/3} + 2/3 }\\
 &=
   3 \cdot 2^{ (2n^{2/3} + 2/3) \log_2 n }.
\end{align*}

Therefore
\begin{align*}
 \sum_{0 \le r \le 2 \lfloor n^{2/3} \rfloor}
  \frac{\binom{n-1}{r}}
       { 2^{n-1} }
&\le
   3 \cdot 2^{ (2n^{2/3} + 2/3) \log_2 n }
   / 2^{n-1}\\
&=
   6 \cdot 2^{ (2n^{2/3} + 2/3) \log_2 n - n }.
\end{align*}

To conclude the proof, it is enough to show
$$
\lim_{n \rightarrow \infty}
 \left( (2n^{2/3} + 2/3) \log_2 n - n \right)
  = - \infty.
$$
This is obtained from the following inequality
$$
(2n^{2/3} + 2/3) \log_2 n - n < n/2 - n = -n/2.
$$
\end{proof}

\begin{lemma}
\label{lemma:FourConverges}
For any non-negative integer $t$,
$$
\lim_{n \rightarrow \infty}
\frac{\lfloor n^{1/3} \rfloor + 1}{\lfloor n^{1/3} \rfloor + 1 + 2t}
=
1,
$$
$$
\lim_{n \rightarrow \infty}
\frac{ \binom{\lfloor n^{2/3} \rfloor + t - 1}{t} } { \binom{ \lfloor n^{2/3} \rfloor + \lfloor n^{1/3} \rfloor + 3t - 1}{t} }
=
1,
$$
$$
\lim_{n \rightarrow \infty}
\sum_{r < \lfloor n^{1/3} \rfloor - 1 + \lfloor n^{2/3} \rfloor + t} \frac{\binom{n - 1}{ r}}{2^{n-1}}
=
0,
$$
and
$$
\lim_{n \rightarrow \infty}
\sum_{r > n - 1 - \lfloor n^{2/3} \rfloor + t} \frac{\binom{n - 1}{ r}}{2^{n-1}}
=
0.
$$
\end{lemma}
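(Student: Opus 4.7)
The plan is to handle the four limits independently; parts 1 and 2 are elementary, and parts 3 and 4 reduce to the previous Lemma \ref{lemma:convergeMinority}.

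For the first limit, I would simply note that for fixed $t$ and $n\to\infty$, $\lfloor n^{1/3}\rfloor \to \infty$, so the quotient has the form $N/(N+2t)$ with $N\to\infty$, and the conclusion is immediate.

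For the second limit, I would expand each binomial coefficient as
$$
\binom{N+t-1}{t} = \frac{1}{t!}\prod_{i=0}^{t-1}(N+i),
$$
so that the ratio becomes
$$
\prod_{i=0}^{t-1}\frac{\lfloor n^{2/3}\rfloor + i}{\lfloor n^{2/3}\rfloor + \lfloor n^{1/3}\rfloor + 2t + i}.
$$
Each of the $t$ factors converges to $1$ as $n\to\infty$, since the numerator and denominator both grow like $n^{2/3}$ while their difference $\lfloor n^{1/3}\rfloor + 2t$ is of strictly smaller order $O(n^{1/3})$. The finite product therefore converges to $1$.

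For the third limit, the point is that for sufficiently large $n$ we have
$$
\lfloor n^{1/3}\rfloor - 1 + \lfloor n^{2/3}\rfloor + t \;\le\; 2\lfloor n^{2/3}\rfloor,
$$
because $\lfloor n^{1/3}\rfloor + t - 1 \le \lfloor n^{2/3}\rfloor$ eventually. Hence the sum in question is bounded above by $\sum_{0 \le r \le 2\lfloor n^{2/3}\rfloor}\binom{n-1}{r}/2^{n-1}$, which tends to $0$ by Lemma \ref{lemma:convergeMinority}.

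For the fourth limit, I would use the symmetry $\binom{n-1}{r}=\binom{n-1}{n-1-r}$ and substitute $s := n-1-r$. The tail transforms into
$$
\sum_{s < \lfloor n^{2/3}\rfloor - t} \frac{\binom{n-1}{s}}{2^{n-1}},
$$
which for large $n$ is again majorized by $\sum_{0\le s \le 2\lfloor n^{2/3}\rfloor}\binom{n-1}{s}/2^{n-1}$, so Lemma \ref{lemma:convergeMinority} finishes the argument. There is no real obstacle here; the only place where one has to be a little careful is in verifying the index inequalities used to invoke Lemma \ref{lemma:convergeMinority} (which hold only for $n$ sufficiently large, but that suffices for a limit statement).
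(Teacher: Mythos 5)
Your proposal is correct and follows essentially the same route as the paper: the last two limits are reduced, via the index inequalities you state, to Lemma \ref{lemma:convergeMinority} (using the symmetry $\binom{n-1}{r}=\binom{n-1}{n-1-r}$ for the fourth), exactly as in the paper's proof. The only difference is that you spell out the "fundamental calculus" for the first two limits (the product expansion of the binomial ratio), which the paper leaves to the reader; your expansion and the order-of-magnitude comparison are correct.
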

\begin{proof}
The first two equations follow from fundamental calculus.

For the third equation,
we claim
$$
0 \le \text{L.H.S.}
 \le
  \lim_{n \rightarrow \infty} 
   \sum_{0 \le r \le 2 \lfloor n^{2/3} \rfloor}
    \frac{\binom{n-1}{r}}
         {2^{n-1}},
$$
since
$$
 \lfloor n^{1/3} \rfloor - 1 + \lfloor n^{2/3} \rfloor + t
  <
   2 \lfloor n^{2/3} \rfloor
$$
for sufficiently large $n$.
By using Lemma \ref{lemma:convergeMinority}, we obtain the third equation.

For the last equation,
we remark that
\begin{align*}
0
 &\le \sum_{r > n - 1 - \lfloor n^{2/3} \rfloor + t} \frac{\binom{n - 1}{ r}}{2^{n-1}}\\
 &=
  \sum_{r < \lfloor n^{2/3} \rfloor - t} \frac{\binom{n - 1}{ r}}{2^{n-1}}\\
 &\le
  \sum_{0 \le r < 2 \lfloor n^{2/3} \rfloor} \frac{\binom{n - 1}{ r}}{2^{n-1}}.
\end{align*}
By using Lemma \ref{lemma:convergeMinority}, we obtain the third equation.
\end{proof}

\begin{lemma}\label{lemma:powerToBinom}
$$ \frac{ t! 2^n }{ n^t } \sim \frac{ 2^n }{ \binom{n-1}{t} }.$$
\end{lemma}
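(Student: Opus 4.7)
The plan is to unwind the asymptotic equivalence $\sim$ directly. By definition, the claim $\frac{t! 2^n}{n^t} \sim \frac{2^n}{\binom{n-1}{t}}$ is equivalent to
$$
\lim_{n \to \infty} \frac{t! \, \binom{n-1}{t}}{n^t} = 1,
$$
after cancelling the common factor $2^n$ and rearranging. So the entire lemma reduces to this one limit.

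Next, I would expand the binomial coefficient using its factorial form, writing
$$
t! \, \binom{n-1}{t} = (n-1)(n-2)\cdots(n-t),
$$
a product of $t$ consecutive integers. Dividing by $n^t$ and distributing one factor of $n$ to each term gives
$$
\frac{t!\,\binom{n-1}{t}}{n^t} = \prod_{i=1}^{t} \left( 1 - \frac{i}{n} \right).
$$

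Finally, since $t$ is a fixed positive integer, the product contains a fixed number of factors, each of which converges to $1$ as $n \to \infty$. Hence the whole product converges to $1$, which establishes the required limit and completes the proof.

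There is no real obstacle here: this is a routine manipulation of binomial coefficients followed by a standard finite-product limit. The only thing to be mindful of is keeping $t$ fixed while letting $n \to \infty$, so that the number of factors in the product does not grow with $n$.
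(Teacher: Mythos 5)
Your proposal is correct and follows essentially the same route as the paper: both reduce the claim to the single limit $\lim_{n\to\infty} t!\binom{n-1}{t}/n^t = 1$ and then observe that a product of a fixed number $t$ of factors, each tending to $1$, tends to $1$. (As a minor point in your favor, your expansion $t!\binom{n-1}{t} = (n-1)(n-2)\cdots(n-t)$ is the exact one, whereas the paper writes the product as $n(n-1)\cdots(n-t+1)$; the discrepancy is immaterial to the limit.)
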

\begin{proof}
\begin{align*}
\frac{ t! 2^n }{ n^t } \sim \frac{ 2^n }{ \binom{n-1}{t} }
&\iff
 \frac{ t! }{ n^t } \sim \frac{1 }{ \binom{n-1}{t} } = \frac{t!}{n (n-1) \dots (n-t+1)} \\
&\iff
 n^t \sim n (n-1) \dots (n-t+1) \\
&\iff
\lim_{n \rightarrow \infty}
 \frac{n (n-1) \dots (n-t+1)}{n^t} = 1.
\end{align*}
It is easy to show the last equality, \textit{i.e.},
$\lim_{n \rightarrow \infty} \frac{n (n-1) \dots (n-t+1)}{n^t} = 1$.
\end{proof}

Note that the function $l$ in the next lemma will appear again
as a lower bound of the cardinality of any perfect $t$-deletion codes of length $n$ with $t < n$.

\begin{lemma}
\label{lemma:lowerBoundAsymp}
Define a function $l : \mathbb{Z}_{>0} \rightarrow \mathbb{R}$ by setting 
\begin{align*}
l(n) &:=
\frac{2^n}{\binom{n-1}{t}}
\frac{\lfloor n^{1/3} \rfloor + 1}{\lfloor n^{1/3} \rfloor + 1 + 2t}
\frac{ \binom{\lfloor n^{2/3} \rfloor + t - 1}{t} } { \binom{ \lfloor n^{2/3} \rfloor + \lfloor n^{1/3} \rfloor + 3t - 1}{t} }\\
& \times
\left(
 1 
  - \sum_{r < \lfloor n^{1/3} \rfloor - 1 + \lfloor n^{2/3} \rfloor + t} \frac{\binom{n - 1}{ r}}{2^{n-1}}
  - \sum_{r > n - 1 - \lfloor n^{2/3} \rfloor + t} \frac{\binom{n - 1}{ r}}{2^{n-1}}
\right).
\end{align*}
Then
$$ l(n) \sim \frac{t! 2^n}{n^t}.$$
\end{lemma}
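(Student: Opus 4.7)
The plan is to write $l(n)$ as a product of four factors, recognize that three of them tend to $1$ by Lemma \ref{lemma:FourConverges}, and identify the remaining factor $2^n/\binom{n-1}{t}$ asymptotically with $t! 2^n / n^t$ via Lemma \ref{lemma:powerToBinom}.

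More concretely, I would set
\begin{align*}
A(n) &:= \frac{\lfloor n^{1/3} \rfloor + 1}{\lfloor n^{1/3} \rfloor + 1 + 2t}, \\
B(n) &:= \frac{ \binom{\lfloor n^{2/3} \rfloor + t - 1}{t} } { \binom{ \lfloor n^{2/3} \rfloor + \lfloor n^{1/3} \rfloor + 3t - 1}{t} }, \\
C(n) &:= 1 - \sum_{r < \lfloor n^{1/3} \rfloor - 1 + \lfloor n^{2/3} \rfloor + t} \frac{\binom{n - 1}{ r}}{2^{n-1}} - \sum_{r > n - 1 - \lfloor n^{2/3} \rfloor + t} \frac{\binom{n - 1}{ r}}{2^{n-1}},
\end{align*}
so that $l(n) = \dfrac{2^n}{\binom{n-1}{t}} \, A(n) \, B(n) \, C(n)$. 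The first two equations of Lemma \ref{lemma:FourConverges} give $\lim_n A(n) = \lim_n B(n) = 1$, and the last two give that the two sums in $C(n)$ both tend to $0$, hence $\lim_n C(n) = 1$. By the product rule for limits, $\lim_n A(n) B(n) C(n) = 1$, which by definition means $A(n) B(n) C(n) \sim 1$.

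Next, I would invoke Lemma \ref{lemma:powerToBinom} to get $\dfrac{2^n}{\binom{n-1}{t}} \sim \dfrac{t! 2^n}{n^t}$. Since the asymptotic equivalence relation $\sim$ is compatible with multiplication (if $f_1 \sim g_1$ and $f_2 \sim g_2$ then $f_1 f_2 \sim g_1 g_2$, which follows directly from the definition via the limit of a product), we conclude
\[
l(n) = \frac{2^n}{\binom{n-1}{t}} \cdot A(n) B(n) C(n) \sim \frac{t! 2^n}{n^t} \cdot 1 = \frac{t! 2^n}{n^t}.
\]

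There is no real obstacle here beyond bookkeeping: the lemma is essentially a repackaging of Lemmas \ref{lemma:FourConverges} and \ref{lemma:powerToBinom}. The only thing to be mildly careful about is making sure $l(n)$ is eventually nonzero (which is clear, since for large $n$ each of $A(n), B(n), C(n)$ is strictly positive and bounded away from $0$), so that the limit $l(n) / (t! 2^n / n^t)$ is well-defined and equals $1$.
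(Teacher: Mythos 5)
Your proposal is correct and matches the paper's proof, which simply cites Lemma \ref{lemma:FourConverges} and Lemma \ref{lemma:powerToBinom}; you have merely spelled out the bookkeeping that the paper leaves implicit.
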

\begin{proof}
This immediately follows from
Lemma \ref{lemma:FourConverges}
and Lemma \ref{lemma:powerToBinom}.
\end{proof}

\section{Analysis on the Cardinality for Perfect Deletion Codes}\label{sec:4}
\begin{lemma}
\label{lemma:UB_cardOfDelSph}
For any positive integers $a$ and $b$
with $a \le b$,
$$
\bigcup_{a - 2t \le r \le b} (\mathbb{B}^{n-t})_{r}
\supset
\mathrm{dS}^t ( \bigcup_{a \le r \le b} \mathbb{B}^n_{r} ).
$$
In particular,
for any $\mathbf{c} \in \mathbb{B}^n$ with $|| \mathbf{c} || = r$
and for any $\mathbf{y} \in \mathrm{dS}^{t}( \mathbf{c} )$,
the inequalities $|| \mathbf{y} || \le || \mathbf{c} || \le || \mathbf{y} || + 2t$ hold.
\end{lemma}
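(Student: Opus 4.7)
The plan is to prove the ``in particular'' statement first, from which the set containment will follow essentially for free. The key observation is that a single deletion alters the run count by 0, 1, or 2 --- and never increases it. So I would first nail down this one-deletion claim, then induct on $t$, then apply it to obtain the containment.

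First I would do a case analysis on a single deletion. Fix $\mathbf{c} \in \mathbb{B}^n$ and consider deleting the symbol at position $i$. Writing $\mathbf{c}$ as a concatenation of maximal runs, the position $i$ lies in some run $R$. There are three cases. If $R$ has length at least $2$, deletion leaves a strictly shorter but nonempty run in the same location, so the run count is unchanged. If $R$ has length $1$ and is the first or last run of $\mathbf{c}$, then deletion simply removes the run, decreasing the run count by $1$. If $R$ has length $1$ and lies strictly between two other runs, those two neighbors are both of the opposite symbol from $R$, so after deleting the single bit of $R$ the two neighbors become adjacent and merge into one run, decreasing the run count by $2$. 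In all cases $\|\mathbf{c}'\| \le \|\mathbf{c}\| \le \|\mathbf{c}'\| + 2$, where $\mathbf{c}'$ is the result of one deletion.

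Next I would lift this to $t$ deletions by a straightforward induction. Any $\mathbf{y} \in \mathrm{dS}^t(\mathbf{c})$ is obtained from $\mathbf{c}$ by a sequence of $t$ single deletions through intermediate sequences $\mathbf{c} = \mathbf{z}_0, \mathbf{z}_1, \dots, \mathbf{z}_t = \mathbf{y}$. Applying the single-deletion bound at each step and telescoping gives $\|\mathbf{y}\| \le \|\mathbf{c}\|$ and $\|\mathbf{c}\| - \|\mathbf{y}\| \le 2t$, which is exactly the two inequalities stated in the ``in particular'' clause.

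Finally, for the set inclusion: let $\mathbf{y} \in \mathrm{dS}^t(\bigcup_{a \le r \le b} \mathbb{B}^n_r)$, so $\mathbf{y} \in \mathrm{dS}^t(\mathbf{c})$ for some $\mathbf{c}$ with $a \le \|\mathbf{c}\| \le b$. Clearly $\mathbf{y} \in \mathbb{B}^{n-t}$, and by the particular claim $a - 2t \le \|\mathbf{c}\| - 2t \le \|\mathbf{y}\| \le \|\mathbf{c}\| \le b$, so $\mathbf{y} \in (\mathbb{B}^{n-t})_r$ for some $r$ with $a - 2t \le r \le b$. I do not expect any real obstacle here; the only mildly delicate point is the three-case analysis for a single deletion, specifically being careful that ``run of length $1$ in the interior'' really does force a merge of the two neighbors, which is the only place where a single deletion drops the run count by $2$ rather than $1$ or $0$.
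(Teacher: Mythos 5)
Your proposal is correct and follows essentially the same route as the paper: reduce to the effect of a single deletion on the run count (it never increases and drops by at most $2$), iterate over $t$ deletions, and then read off the set containment. The paper simply asserts the single-deletion bound without proof, whereas you supply the three-case analysis justifying it; this is a harmless (indeed welcome) elaboration, not a different approach.
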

\begin{proof}
It is enough to show that
for any $\mathbf{c} \in (\mathbb{B}^n)_a$,
$$
\bigcup_{a - 2t \le r \le a} ( \mathbb{B}^{n-t} )_r
\supset
\mathrm{dS}^{t} (\mathbf{c}).
$$

By a single deletion, the number of runs cannot increase
and can only decrease by at most two,
i.e., 
$a - 2 \le || \mathbf{y} || \le a$.
Hence, by $t$-deletions, the number of runs is between $a-2t$ and $a$.
Additionally, the length is just $n-t$.
\end{proof}

From here, we focus on the case where $C$ is a perfect $t$-deletion code of length $n$
with $t < n$.
\begin{lemma}
\label{lemma:LB_cardOfPerfect}
Let $C$ be a perfect $t$-deletion code of length $n$ with $t < n$.
For any positive integers $a$ and $b$ with $a \le b \le n - t$,
$$
\mathrm{dS}^t ( \bigcup_{a \le r \le b + 2t} C_{r} )
\supset
\bigcup_{a \le r \le b }
(\mathbb{B}^{n-t})_{r}.
$$
Hence
$$
\sum_{a \le r \le b + 2t} \# \mathrm{dS}^t ( C_{r} )
\ge
2 \sum_{a \le p \le b} \binom{n-t-1}{p-1}.
$$
\end{lemma}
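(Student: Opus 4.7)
The plan is to first establish the set-theoretic inclusion, and then pass to cardinalities using the disjointness properties that come from $C$ being a $t$-deletion code, together with Fact \ref{lemma:cardOfBnr}.

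For the inclusion, I would pick an arbitrary $\mathbf{y} \in (\mathbb{B}^{n-t})_p$ with $a \le p \le b$ and show that it lies in $\mathrm{dS}^t(C_r)$ for some $r$ in the range $a \le r \le b+2t$. By the perfectness hypothesis, there exists a codeword $\mathbf{c} \in C$ with $\mathbf{y} \in \mathrm{dS}^t(\mathbf{c})$. Applying Lemma \ref{lemma:UB_cardOfDelSph} to this pair gives the two-sided bound $||\mathbf{y}|| \le ||\mathbf{c}|| \le ||\mathbf{y}|| + 2t$, i.e., $p \le ||\mathbf{c}|| \le p + 2t$. Combined with $a \le p$ and $p \le b$, this yields $a \le ||\mathbf{c}|| \le b+2t$, so $\mathbf{c} \in C_r$ for some $r$ in the required range. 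This establishes the inclusion of sets.

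For the cardinality inequality, I would take the size of both sides. On the right, the subsets $(\mathbb{B}^{n-t})_r$ for different $r$ are disjoint by definition, so
\[
\#\bigcup_{a \le r \le b}(\mathbb{B}^{n-t})_r = \sum_{a \le p \le b} \#(\mathbb{B}^{n-t})_p = 2 \sum_{a \le p \le b} \binom{n-t-1}{p-1}
\]
by Fact \ref{lemma:cardOfBnr}. On the left, since $C$ is a $t$-deletion code, the deletion spheres $\mathrm{dS}^t(\mathbf{c})$ of distinct codewords are pairwise disjoint; partitioning $C$ by run count, the sets $\mathrm{dS}^t(C_r)$ are therefore also pairwise disjoint, and
\[
\#\mathrm{dS}^t\Bigl(\bigcup_{a \le r \le b+2t} C_r\Bigr) = \sum_{a \le r \le b+2t} \#\mathrm{dS}^t(C_r).
\]
Combining these with the inclusion gives exactly the claimed inequality.

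I do not anticipate any genuine obstacle: the whole argument is a direct bookkeeping exercise whose one substantive ingredient is the run-count bound from Lemma \ref{lemma:UB_cardOfDelSph}. The only point that requires a moment of care is ensuring that the offset by $2t$ on the codeword side matches the inequality $||\mathbf{c}|| \le ||\mathbf{y}|| + 2t$ (and not $+t$), which is precisely why the upper bound of summation must be $b+2t$ rather than $b+t$.
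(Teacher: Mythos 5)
Your proposal is correct and follows essentially the same route as the paper: perfectness supplies a codeword $\mathbf{c}$ covering each $\mathbf{y}$, the run-count bound of Lemma \ref{lemma:UB_cardOfDelSph} places $||\mathbf{c}||$ in $[a, b+2t]$, and the cardinality inequality then follows from Fact \ref{lemma:cardOfBnr}. The only (harmless) difference is that you invoke disjointness of the spheres to get equality $\#\mathrm{dS}^t(\bigcup_r C_r)=\sum_r \#\mathrm{dS}^t(C_r)$, whereas the paper only needs the trivial subadditivity $\#\bigcup_r \mathrm{dS}^t(C_r)\le\sum_r \#\mathrm{dS}^t(C_r)$, which already points in the required direction.
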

\begin{proof}
By the assumption of perfectness for $C$,
we have
$$
\mathbb{B}^{n-t} = \mathrm{dS}^{t} (C).
$$

Hence for any $\mathbf{y} \in \mathbb{B}^{n-t}$,
there exists $c \in C$ such that 
\begin{align}
\mathbf{y} &\in \mathrm{dS}^{t} (c). \label{step1:inProof_LB_cardOfPerfect}
\end{align}
By Lemma \ref{lemma:UB_cardOfDelSph},
$ || c || - 2t \le || y || \le || c ||$ holds.
In other words,
$$ || y || \le || c || \le || y || + 2t.$$
Hence 
\begin{align}
 c \in \bigcup_{ || x || \le r \le || x || + 2t } C_r. \label{step2:inProof_LB_cardOfPerfect}
\end{align}

By (\ref{step1:inProof_LB_cardOfPerfect}) and (\ref{step2:inProof_LB_cardOfPerfect}),
$$
\mathbf{y} \in \mathrm{dS}^t ( \bigcup_{ || y || \le r \le || y || + 2t } C_r ).
$$

If $\mathbf{y}$ is an element of $\bigcup_{a \le r \le b} (\mathbb{B}^{n})_r$,
i.e., $a \le || \mathbf{y} || \le b$,
the statement is obtained.

On the other hand,
the inequality statement follows from
$ \mathrm{dS}^t ( \cup_{a \le r \le b + 2t} C_r ) = \cup_{a \le r \le b + 2t} \mathrm{dS}^t ( C_r )$
and from Fact \ref{lemma:cardOfBnr}.
\end{proof}

Conversely, the following is the upper bound of 
$\sum_{a \le r \le b + 2t} \# \mathrm{dS}^t ( C_{r} )$.
\begin{lemma}
\label{lemma:UB_cardOfDelSphere}
Let $C$ be a perfect $t$-deletion code of length $n$.
For any positive integers $a$ and $b$ with $a \le b \le n$,
$$
\binom{b + 3t - 1}{t} \sum_{a \le r \le b + 2t} \# C_r
\ge
\sum_{a \le r \le b + 2t} \# \mathrm{dS}^t ( C_{r} ).
$$
\end{lemma}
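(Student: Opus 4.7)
The plan is a direct computation using two ingredients: the disjointness of deletion spheres for codewords of a $t$-deletion code, and the Levenshtein upper bound on the size of a single deletion sphere from Fact \ref{fact:levA}.

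First I would fix an $r$ with $a \le r \le b + 2t$ and exploit the fact that $C$ is a $t$-deletion code, so that the deletion spheres $\mathrm{dS}^t(\mathbf{c})$ for distinct $\mathbf{c} \in C_r$ are pairwise disjoint. This turns the union into a disjoint union and gives
$$
\# \mathrm{dS}^t(C_r) = \sum_{\mathbf{c} \in C_r} \# \mathrm{dS}^t(\mathbf{c}).
$$
Next, I would invoke Fact \ref{fact:levA} to bound each individual deletion sphere: for $\mathbf{c} \in C_r$, we have $\|\mathbf{c}\| = r$, hence
$$
\# \mathrm{dS}^t(\mathbf{c}) \le \binom{r + t - 1}{t}.
$$
Combining these two facts yields $\# \mathrm{dS}^t(C_r) \le \binom{r+t-1}{t} \cdot \# C_r$.

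Finally I would use monotonicity of $\binom{x+t-1}{t}$ in $x$. Since the summation range is $a \le r \le b + 2t$, each $r$ satisfies $r + t - 1 \le b + 3t - 1$, so $\binom{r+t-1}{t} \le \binom{b+3t-1}{t}$. Summing over $r$ gives
$$
\sum_{a \le r \le b + 2t} \# \mathrm{dS}^t(C_r) \le \binom{b+3t-1}{t} \sum_{a \le r \le b + 2t} \# C_r,
$$
which is exactly the claimed inequality.

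There is essentially no obstacle here; the statement is a uniform-over-$r$ application of the upper half of Fact \ref{fact:levA} combined with disjointness. The only subtlety worth flagging is that perfectness of $C$ is not actually used (only the $t$-deletion property is), so the hypothesis in the lemma could be weakened. The result is clearly intended to be paired with Lemma \ref{lemma:LB_cardOfPerfect} to sandwich $\sum_{a \le r \le b+2t} \# \mathrm{dS}^t(C_r)$ between a lower bound involving $\binom{n-t-1}{p-1}$ and the upper bound $\binom{b+3t-1}{t} \sum \# C_r$, which after invoking Lemma \ref{lemma:binomExample3} should produce the asymptotic estimate $l(n)$ from Lemma \ref{lemma:lowerBoundAsymp}.
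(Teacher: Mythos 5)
Your proposal is correct and follows essentially the same route as the paper's proof: disjointness of deletion spheres from the $t$-deletion property, the upper half of Fact \ref{fact:levA} with $\|\mathbf{x}\| = r$, and the monotonicity bound $\binom{r+t-1}{t} \le \binom{b+3t-1}{t}$ for $r \le b+2t$. Your observation that perfectness is never used is also accurate.
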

\begin{proof}
\begin{align*}
\sum_{a \le r \le b + 2t} \# \mathrm{dS}^t ( C_{r} )
&=
 \sum_{a \le r \le b + 2t} \sum_{\mathbf{x} \in C_r} \# \mathrm{dS}^t ( \mathbf{x} )
  & (\text{$C$ is a $t$-deletion code.})\\
&\le
 \sum_{a \le r \le b + 2t} \sum_{\mathbf{x} \in C_r} \binom{ || \mathbf{x} || + t - 1}{t}
  & (\text{Fact \ref{fact:levA}.})\\
&=
 \sum_{a \le r \le b + 2t} \sum_{\mathbf{x} \in C_r} \binom{ r + t - 1}{t}
  & (\mathbf{x} \in C_r.)\\
&=
 \sum_{a \le r \le b + 2t} \# C_r \binom{ r + t - 1}{t}
  & (\sum_{\mathbf{x} \in C_r} 1 = \# C_r.)\\
&\le
 \binom{b + 2t + t - 1}{t}  \sum_{a \le r \le b + 2t} \# C_r.
  & (r \le b + 2t.)
\end{align*}
\end{proof}

\begin{lemma}
\label{lemma:LB_cardOfPerfect2}
Let $C$ be a perfect $t$-deletion code of length $n$.
For any positive integers $a, b,$ and $c$ with $c \le a < b \le n$,
$$
\sum_{a \le r \le b + 2t} \# C_r
\ge
\frac{ \binom{c + t -1}{t} }{ \binom{c + b - a + 3t - 1}{t} }
\frac{2}{ \binom{n-1}{t} }
 \sum_{a \le p \le b}
  \binom{n - 1}{p + t - 1}.
$$
\end{lemma}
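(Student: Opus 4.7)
The plan is to chain together the two bounds just proved and then massage the binomial factors using the identities from Section \ref{sec:3}. The hypothesis $c \le a < b \le n$ aligns exactly with the assumptions needed in Lemmas \ref{lemma:binomExample3} and \ref{lemma:binomBoundExample2}, so I expect the proof to be essentially a computation once those pieces are put in the right order.

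First I would combine Lemma \ref{lemma:LB_cardOfPerfect} with Lemma \ref{lemma:UB_cardOfDelSphere} to sandwich the middle quantity, obtaining
$$
\binom{b+3t-1}{t}\sum_{a \le r \le b+2t} \# C_r
\;\ge\;
\sum_{a \le r \le b+2t} \# \mathrm{dS}^t(C_r)
\;\ge\;
2 \sum_{a \le p \le b} \binom{n-t-1}{p-1},
$$
and hence
$$
\sum_{a \le r \le b+2t} \# C_r
\;\ge\;
\frac{2}{\binom{b+3t-1}{t}} \sum_{a \le p \le b} \binom{n-t-1}{p-1}.
$$

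Next I would convert the summand into the target form $\binom{n-1}{p+t-1}$ by invoking Lemma \ref{lemma:binomExample3}, which rewrites
$$
\binom{n-t-1}{p-1} \;=\; \binom{p+t-1}{t}\,\frac{1}{\binom{n-1}{t}}\binom{n-1}{p+t-1}.
$$
The factor $\binom{p+t-1}{t}$ is monotone increasing in $p$, so for every $p$ in the summation range $p \ge a$ gives $\binom{p+t-1}{t} \ge \binom{a+t-1}{t}$. Pulling this uniform lower bound out of the sum yields
$$
\sum_{a \le r \le b+2t} \# C_r
\;\ge\;
\frac{\binom{a+t-1}{t}}{\binom{b+3t-1}{t}}\,\frac{2}{\binom{n-1}{t}}\sum_{a \le p \le b}\binom{n-1}{p+t-1}.
$$

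Finally, the ratio $\binom{a+t-1}{t}/\binom{b+3t-1}{t}$ in front must be shown to be at least the target ratio $\binom{c+t-1}{t}/\binom{c+b-a+3t-1}{t}$. This is precisely what Lemma \ref{lemma:binomBoundExample2} provides, since rearranging its conclusion gives exactly
$$
\frac{\binom{c+t-1}{t}}{\binom{c+b-a+3t-1}{t}}
\;\le\;
\frac{\binom{a+t-1}{t}}{\binom{b+3t-1}{t}},
$$
under the hypothesis $c \le a \le b$, which is satisfied. Substituting this inequality into the previous display yields the claimed bound. No step is genuinely hard; the only thing to be careful about is matching the indices of the binomials so that Lemmas \ref{lemma:binomExample3} and \ref{lemma:binomBoundExample2} apply verbatim, and keeping the direction of every inequality correct when the ratios are inverted.
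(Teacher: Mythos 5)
Your proof is correct and follows essentially the same route as the paper: chain Lemmas \ref{lemma:LB_cardOfPerfect} and \ref{lemma:UB_cardOfDelSphere}, convert the summand via Lemma \ref{lemma:binomExample3} together with the monotonicity of $\binom{p+t-1}{t}$ in $p$, and finish with Lemma \ref{lemma:binomBoundExample2}. The only difference is the order in which the last two steps are applied (the paper invokes Lemma \ref{lemma:binomBoundExample2} first and the summand conversion afterwards), which is immaterial.
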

\begin{proof}
Let us start with combining 
Lemmas \ref{lemma:LB_cardOfPerfect} and \ref{lemma:UB_cardOfDelSphere}:
\begin{align}
\binom{b + 3t - 1}{t} \sum_{a \le r \le b + 2t} \# C_r
\ge
2 \sum_{a \le p \le b} \binom{n - t - 1}{p - 1}.   \label{eq:proof_LB_cardOfPerfect2}
\end{align}

For applying Lemma \ref{lemma:binomBoundExample2},
and the inequality (\ref{eq:proof_LB_cardOfPerfect2}),
the following holds:
$$
\sum_{a \le r \le b + 2t} \# C_r
\ge
\frac{ \binom{c + t -1}{t} }{ \binom{c + b - a + 3t - 1}{t} }
 \frac{ \binom{b + 3t - 1}{t} } { \binom{a + t -1}{t} }
\sum_{a \le r \le b + 2t} \# C_r
\ge
\frac{ \binom{c + t -1}{t} }{ \binom{c + b - a + 3t - 1}{t} }
 \frac{ 1 }{ \binom{a + t -1}{t} }
 2
 \sum_{a \le p \le b} \binom{n - t - 1}{p - 1}.
$$

Furthermore,
\begin{align*}
\text{R.H.S.}
&=
\frac{ \binom{c + t -1}{t} }{ \binom{c + b - a + 3t - 1}{t} }
 2
 \sum_{a \le p \le b}
 \frac{ \binom{n - t - 1}{p - 1} } { \binom{a + t -1}{t} }\\
&\ge
\frac{ \binom{c + t -1}{t} }{ \binom{c + b - a + 3t - 1}{t} }
2
 \sum_{a \le p \le b}
 \frac{ \binom{n - t - 1}{p - 1} }{ \binom{p + t -1}{t} }
 & ( a \le p \text{ implies } \binom{a+t-1}{t} \le \binom{p+t-1}{t}. ) \\
&=
\frac{ \binom{c + t -1}{t} }{ \binom{c + b - a + 3t - 1}{t} }
\frac{2}{ \binom{n-1}{t} }
 \sum_{a \le p \le b}
  \binom{n - 1}{p + t - 1}.
  & (\text{by Lemma \ref{lemma:binomExample3}}.)
\end{align*}
\end{proof}

The following statement provides a non-asymptotic lower bound for perfect $t$-deletion codes.
\begin{theorem}\label{thm:main2}
Let $C$ be a perfect $t$-deletion code of length $n$ with $n > t$.
Then
\begin{align*}
\# C
& \ge
\frac{2^n}{\binom{n-1}{t}}
\frac{\lfloor n^{1/3} \rfloor + 1}{\lfloor n^{1/3} \rfloor + 1 + 2t}
\frac{ \binom{\lfloor n^{2/3} \rfloor + t - 1}{t} } { \binom{ \lfloor n^{2/3} \rfloor + \lfloor n^{1/3} \rfloor + 3t - 1}{t} }\\
& \times 
\left(
 1 
  - \sum_{r < \lfloor n^{1/3} \rfloor - 1 + \lfloor n^{2/3} \rfloor + t} \frac{\binom{n - 1}{ r}}{2^{n-1}}
  - \sum_{r > n - 1 - \lfloor n^{2/3} \rfloor + t} \frac{\binom{n - 1}{ r}}{2^{n-1}}
\right).
\end{align*}
\end{theorem}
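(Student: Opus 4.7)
The idea is to invoke Lemma \ref{lemma:LB_cardOfPerfect2} for a sliding family of windows indexed by $a$, and then combine the resulting inequalities using the double-sum bounds of Lemma \ref{lemma:doubleSumUbLb}. Concretely, I will set $K := \lfloor n^{1/3} \rfloor$, $c := \lfloor n^{2/3} \rfloor$, $a_0 := \lfloor n^{2/3} \rfloor$, and $J := n - 2\lfloor n^{2/3} \rfloor$, and for each $a \in \{a_0, a_0 + 1, \ldots, a_0 + J\}$ take $b := a + K$. For $n$ large enough, all required hypotheses $c \le a < b \le n - t$ and $J \ge K + 2t$ are satisfied; for the small-$n$ regime where this fails, the parenthesized tail factor in the claimed bound is non-positive and the inequality degenerates to the trivial $\# C \ge 0$.

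Applying Lemma \ref{lemma:LB_cardOfPerfect2} to each such $a$ and summing in $a$ gives
$$\sum_{a = a_0}^{a_0 + J} \sum_{r = a}^{a + K + 2t} \# C_r \; \ge \; \frac{\binom{c + t - 1}{t}}{\binom{c + K + 3t - 1}{t}} \cdot \frac{2}{\binom{n-1}{t}} \sum_{a = a_0}^{a_0 + J} \sum_{p = a}^{a + K}\binom{n - 1}{p + t - 1}.$$
Both double sums are then controlled by Lemma \ref{lemma:doubleSumUbLb}. On the left, each $\# C_r$ appears at most $K + 2t + 1$ times in the outer sum and all indices lie inside $\{1, \ldots, n\}$, so the left-hand side is at most $(K + 2t + 1)\,\# C$. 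On the right, I apply the leftmost inequality of Lemma \ref{lemma:doubleSumUbLb} with $f(r) := \binom{n - 1}{a_0 + r + t - 1}$, $I := K$, and $K_{\mathrm{lem}} := J$ (permissible because $K \le J$ for large $n$), obtaining the lower bound $(K + 1)\sum_{r = a_0 + K + t - 1}^{a_0 + J + t - 1}\binom{n-1}{r}$.

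Combining these two estimates and dividing by $K + 2t + 1$ yields precisely the factor $(K + 1)/(K + 2t + 1) = (\lfloor n^{1/3} \rfloor + 1)/(\lfloor n^{1/3} \rfloor + 1 + 2t)$, the binomial ratio appearing in the theorem, and a partial binomial sum $\sum_{r = L}^{U}\binom{n - 1}{r}$, where a quick check of the endpoints gives $L = \lfloor n^{2/3} \rfloor + \lfloor n^{1/3} \rfloor + t - 1$ and $U = n - 1 - \lfloor n^{2/3} \rfloor + t$, exactly the cutoffs in the statement. Rewriting this partial sum as $2^{n-1}\bigl(1 - \text{lower tail} - \text{upper tail}\bigr)$ via $\sum_{r = 0}^{n - 1}\binom{n - 1}{r} = 2^{n - 1}$ and absorbing $2 \cdot 2^{n-1} = 2^n$ completes the derivation. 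I expect the main obstacle to lie not in any deep estimate but in the index bookkeeping: the parameters $a_0$, $J$, and $b(a)$, together with the assignment of the roles $I$ versus $K_{\mathrm{lem}}$ in the two separate uses of Lemma \ref{lemma:doubleSumUbLb}, must be chosen so that the numerator $K + 1$, the denominator $K + 2t + 1$, and the tail cutoffs $L, U$ line up simultaneously and exactly with the theorem's expression.
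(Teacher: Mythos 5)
Your proposal is correct and follows essentially the same route as the paper: the windows $a = \lfloor n^{2/3}\rfloor + i$, $b = a + \lfloor n^{1/3}\rfloor$ are exactly the paper's substitution $a := c+i$, $b := c+i+j$ with $j = \lfloor n^{1/3}\rfloor$, $c = \lfloor n^{2/3}\rfloor$, and both sides of the summed inequality are handled by the same two applications of Lemma \ref{lemma:doubleSumUbLb}, yielding the identical factor $(j+1)/(j+2t+1)$ and the identical tail cutoffs. Your explicit remark that the bound degenerates to a triviality for small $n$ (where the window family is empty) is a minor point the paper leaves implicit, but otherwise the arguments coincide.
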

\begin{proof}
For any non-negative integer $i$ and any positive integers $j$ and $c$ with $i + j \le n-c$,
the following inequalities hold $$ c \le c+i < c + i + j \le n.$$
Set $a := c+i$ and $b := c + i + j$ into Lemma \ref{lemma:LB_cardOfPerfect2}:
\begin{align}
\sum_{c+i \le r \le c+i+j+2t } \# C_r
&\ge
\frac{ \binom{c+t-1}{t} }{ \binom{c+j+3t-1}{t} } \frac{2}{ \binom{n-1}{t} }
\sum_{c + i \le p \le c + i + j} \binom{ n-1 }{ p+t-1 }.
\label{eq:thmCard.1}
\end{align}

Then consider the sum of inequalities for $i$ over $\{0, 1, \dots, n-2c \}$.
The left hand side of the sum of (\ref{eq:thmCard.1}) is upper bounded as below
\begin{align*}
\text{L.H.S.}
&=
\sum_{0 \le i \le n - 2c}
\sum_{c+i \le r \le c+i+j+2t } \# C_r\\
&=
\sum_{0 \le i \le n - 2c}
\sum_{0 \le r \le j+2t }  \# C_{c+i + r}\\
&\le
(j + 2t + 1)
\sum_{0 \le i \le n - 2c + j+2t}
\# C_{c+i} & \text{(by Lemma \ref{lemma:doubleSumUbLb})}\\
&\le
 (j+2t+1) \# C. & \text{(by $\#C = \sum_{1 \le r \le n} \# C_r$.)}
\end{align*}

On the other hand,
the right hand side of the sum of (\ref{eq:thmCard.1}) is lower bounded as below
\begin{align*}
\text{R.H.S.}
&=
\sum_{0 \le i \le n - 2c}
\frac{ \binom{c+t-1}{t} }{ \binom{c+j+3t-1}{t} } \frac{2}{ \binom{n-1}{t} }
\sum_{c + i \le p \le c + i + j} \binom{ n-1 }{ p+t-1 }\\
&=
\frac{ \binom{c+t-1}{t} }{ \binom{c+j+3t-1}{t} } \frac{2}{ \binom{n-1}{t} }
\sum_{0 \le i \le n - 2c}
\sum_{c + i \le p \le c + i + j} \binom{ n-1 }{ p+t-1 }\\
&=
\frac{ \binom{c+t-1}{t} }{ \binom{c+j+3t-1}{t} } \frac{2}{ \binom{n-1}{t} }
\sum_{0 \le i \le n - 2c}
\sum_{0 \le p \le j} \binom{ n-1 }{ c+i+p+t-1 }\\
&\ge
\frac{ \binom{c+t-1}{t} }{ \binom{c+j+3t-1}{t} } \frac{2}{ \binom{n-1}{t} }
(j+1)
\sum_{j \le r \le n - 2c}
\binom{ n-1 }{ c+r+t-1 }  & \text{(by Lemma \ref{lemma:doubleSumUbLb})}\\
&=
\frac{ \binom{c+t-1}{t} }{ \binom{c+j+3t-1}{t} } \frac{2}{ \binom{n-1}{t} }
(j+1)
\sum_{c+j \le r \le n - c}
\binom{ n-1 }{ r+t-1 }.
& (c+r \mapsto r)
\end{align*}

By combining both the upper bound and the lower bound above,
we have
\begin{align*}
\# C
&\ge
 \frac{j+1}{j+2t+1}
 \frac{ \binom{c+t-1}{t} }{ \binom{c+j+3t-1}{t} }
 \frac{ 2 }{ \binom{ n-1 }{t} }
 \sum_{c + j \le r \le n-c} \binom{n-1}{t-1+r}\\
&=
 \frac{j+1}{j+1+2t}
 \frac{ \binom{c+t-1}{t} }{ \binom{c+j+3t-1}{t} }
 \frac{ 2^n }{ \binom{ n-1 }{t} }
 \left(
 1
 - \sum_{r < t-1+c+j} \frac{ \binom{n-1}{r} }{2^{n-1}}
 - \sum_{r > t-1+n-c} \frac{ \binom{n-1}{r} }{2^{n-1}}
 \right).
\end{align*} 
The last equation follows from
$$
\sum_{c+j \le r \le n-c} \binom{n-1}{t-1+r}
=
2^{n-1}
 - \sum_{r < t-1+c+j} \binom{n-1}{r}
 - \sum_{r > t-1+n-c} \binom{n-1}{r}.
$$

By setting $j := \lfloor n^{1/3} \rfloor$ and $c := \lfloor n^{2/3} \rfloor$,
we conclude this proof.
\end{proof}

Finally we are ready to prove the main contribution.
\begin{proof}[Proof of Theorem \ref{thm:main1}]
Note that the lower bound of 
Theorem \ref{thm:main2} is as the same as $l(n)$ in Lemma \ref{lemma:lowerBoundAsymp}.
Hence
$$
\frac{t! 2^n}{n^t} \sim l(n) \lesssim C_n.
$$
On the other hand,
$\# C_n \le M^t (n)$ holds in general,
and $M^t(n) \lesssim \frac{t! 2^n}{n^t}$ holds by Fact \ref{fact:levB}.
Therefore
$$
C_n \lesssim M^t (n) \lesssim \frac{t! 2^n}{n^t}.
$$
This implies that the statement holds.
\end{proof}

\section{Conclusion and Remarks}\label{sec:5}

This paper focused on the cardinality of perfect multi deletion binary codes.
The lower bound for any perfect $t$-deletion code
and
the asymptotic achievable of Levenshtein's upper bound 
were obtained as Theorems \ref{thm:main2} and \ref{thm:main1} respectively.
The lemmas obtained in \S4 are useful for analysis on the cardinality of perfect deletion codes.
The authors would like to leave following remarks.
\begin{remark}
We can relax the assumption in Theorem \ref{thm:main2} that 
there exists a perfect $t$-deletion code of length $n$
for ``each positive integer $n$.''
For example, an assumption that
there exists infinitely many perfect $t$-deletion codes
is enough to achieve the asymptotic optimality.
Let us assume there exists infinitely many such codes.
Let $A$ denote the set of code lengths for which $t$-deletion codes exists.
Let $A = \{ a_1, a_2, a_3, \dots \}$ with $a_i < a_{i+1}$ for any $i \ge 1$.
Then we can rewrite the statement
$$
C_{a_n} \sim M^t (a_n) \sim \frac{t! 2^{a_n}}{(a_n)^t}.
$$
\end{remark}
\begin{remark}
By an exhaustive computer experimental search,
the authors have verified that there exist perfect 2-deletion code of length $2,3,4,5$ and $6$.
The following are examples of $2$-deletion codes.
\begin{itemize}
\item Any singleton in $\mathbb{B}^2$ is a perfect code of length $2$.
There are just 4 perfect codes of length 2.
\item The code $\{ 000, 111 \}$ is a perfect code of length $3$.
There are just 7 perfect codes of length 3.
\item The code $\{ 0100, 1111 \}$ is a perfect code of length $4$.
There are just 10 perfect codes of length 4.
\item The code $\{ 01010, 11111\}$ is a perfect code of length $5$.
There are just 12 perfect codes of length 5.
\item The code $\{ 000000, 111000, 010101, 111111 \}$ is a perfect code of length $6$.
There are just 10 perfect codes of length 6.
\end{itemize}

On the other hand, 
the authors also have verified that
there do not exist any perfect $2$-deletion codes of length $7$.
The existence for longer lengths has not been checked yet.

It is also interesting to extend our results over other deletion channels,
e.g.,
burst deletions \cite{levenshtein1967asymptotically,schoeny2017codes},
balanced adjacent deletions \cite{hagiwara2017perfect},
channel models for racetrack memory \cite{chee2017coding,sima2019racetrack},
bounded insertions/deletions \cite{nozaki2019bounded}
and so on.

\end{remark}
\begin{remark}
Conversely, if we obtain a proof of that the upper bound is not tight for some $t$,
it implies that only finite perfect $t$-deletion codes exist.
\end{remark}

\section*{Acknowledgment}
\addcontentsline{toc}{section}{Acknowledgment}

The research has been partly executed in response
to support of KIOXIA Corporation (former Toshiba Memory Corporation) 
and JSPS KAKENHI Grant Number 18H01435, Grant-in-Aid for Scientific Research (B).

\bibliographystyle{plain}
\bibliography{reference}
\end{document}